\newcommand{\supast}{{${^\ast}$}}
\newcommand{\D}{{\mathbb{D}}}
\newcommand{\C}{{\mathbb{C}}}
\newcommand{\Z}{{\mathbb{Z}}}
\newcommand{\alg}{{\mathcal A}}
\newcommand{\loneZ}{{\ell^1(\Z)}}
\newcommand{\linfZ}{{\ell^\infty(\Z)}}
\newcommand{\HerA}{{\alg_S}}
\newcommand{\st}{{\omega}}
\theoremstyle{plain}
\newtheorem*{theorem*}{Theorem}
\newtheorem*{lemma*}{Lemma}
\newtheorem*{corollary*}{Corollary}
\begin{document}


\title[Characterisation of $C^\ast$-algebras]{A characterisation of $C^\ast$-algebras through positivity of functionals}

\author{Marcel de Jeu}
\address{Marcel de Jeu, Mathematical Institute, Leiden University, P.O.\ Box 9512, 2300 RA Leiden, The Netherlands}
\email{mdejeu@math.leidenuniv.nl}

\author{Jun Tomiyama}
\address{Jun Tomiyama, Department of Mathematics, Tokyo Metropolitan University, Minami-Osawa, Hachioji City, Japan}
\email{juntomi@med.email.ne.jp}

\subjclass[2010]{Primary 46K05; Secondary 46H05}

\keywords{Involutive Banach algebra, $C^\ast$-algebra, positive functional}


\begin{abstract}
We show that a unital involutive Banach algebra, with identity of norm one and continuous involution, is a $C^\ast$-algebra, with the given involution and norm, if every continuous linear functional attaining its norm at the identity is positive.
\end{abstract}

\maketitle


If $\alg$ is an involutive Banach algebra, then a linear map $\st:\alg\to\C$ is called positive if $\st(a^\ast a)\geq 0$, for all $a\in \alg$. If the involution is isometric, and $\alg$ has an identity 1 of norm one, then $\st$ is automatically continuous, and $\Vert\st\Vert=\st(1)$, see \cite[Lemma~I.9.9]{Ta}.\footnote{More generally, cf.\ \cite[Theorem~11.31]{Rudin_FA}: even if the involution is not continuous, a positive linear functional is always continuous. If the involution is continuous, and  $\Vert a^\ast\Vert\leq\beta\Vert a\Vert$, for all $a\in\alg$, then $\Vert\st\Vert\leq\sqrt{\beta}\,\st(1)$.} For a unital $C^\ast$-algebra $\alg$, there is a converse: if $\st:\alg\to\C$ is continuous, and $\st(1) = \Vert\st\Vert$, then $\st$ is positive (cf.\ \cite[Lemma~III.3.2]{Ta}). Thus the positive continuous linear functionals on a unital $C^\ast$-algebra are precisely the continuous linear functionals attaining their norm at the identity. Consequently, any Hahn-Banach extension of a positive linear functional, defined on a unital $C^\ast$-subalgebra, is automatically positive again. As is well known, this is a basic characteristic of $C^\ast$-algebras that makes the theory of states on such algebras a success.

If $\alg$ is a unital involutive Banach algebra with identity of norm one, but not a $C^\ast$-algebra, then this converse, as valid for unital $C^\ast$-algebras, need not hold: even when the involution is isometric, there can exist continuous linear functionals on $\alg$ that attain their norm at the identity, but which fail to be positive. For example, for $H^\infty(\D)$, the algebra of bounded holomorphic functions on the open unit disk, supplied with the supremum norm and involution $f^\ast(z)=\overline{f(\bar z)}$ ($z\in\D,\,f\in H^\infty(\D)$), all point evaluations attain their norm at the identity, but only the evaluation in points in $(-1,1)$ are positive. As another example, consider $\loneZ$, the group algebra of the integers. Then its dual can be identified with $\linfZ$, and the continuous linear functionals attaining their norm at the identity are then the bounded maps $\st:\Z\to\C$, such that $\st(0)=\Vert\st\Vert_\infty$. Not all such continuous linear functionals are positive.\footnote{Of course, Bochner's theorem describes the ones that \emph{are} positive.}
 For example, if $\lambda\in\C,\,|\lambda|\leq 1$, then $\st_\lambda\in\linfZ$, defined by $\st(0)=1$, $\st(1)=\lambda$, and $\st(n)=0$ if $n\neq 0,1$, attains its norm at the identity of $\loneZ$. However, if we define $\ell_0:\Z\to\C$ by $\ell_0(0)=1$, $\ell_0(1)=1$, and $\ell_0(n)=0$ if $n\neq 0,1$, then $\ell_0\in\loneZ$, but $\st_0(\ell_0^\ast\ell_0)=2+\lambda$ need not even be real.

It is the aim of this note to show that the existence of examples as above is no coincidence: there necessarily exist continuous linear functionals that attain their norm at the identity, yet are not positive, \emph{because} the algebra in question has a continuous involution, but is not a $C^\ast$-algebra. This is the main content of the result below which, with a rather elementary proof, follows from the far less elementary Vidav-Palmer theorem \cite[Theorem~38.14]{BD}. We formulate the latter first for convenience.

\begin{theorem*}[Vidav-Palmer]
Let $\alg$ be a unital Banach algebra with identity of norm one. Let $\HerA$ be the real linear subspace of all $a\in A$ such that $\st(a)$ is real, for every continuous linear functional $\st$ on $\alg$ such that $\Vert \omega\Vert=\omega(1)$. If $\alg=\HerA + i\, \HerA$, then this is automatically a direct sum of real linear subspaces, and the well defined map $(a_1+ia_2)\mapsto(a_1-ia_2)$ \textup{(}$a_1,a_2\in\HerA$\textup{)} is an involution on $\alg$ which, together with the given norm, makes $\alg$ into a $C^\ast$-algebra.
\end{theorem*}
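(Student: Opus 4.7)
The plan is to use numerical-range techniques on $\alg$ to analyse $\HerA$, establish the pivotal closure property $h\in\HerA\Rightarrow h^2\in\HerA$, and from this read off an isometric involution satisfying the $C^\ast$-identity.

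First I would recast $\HerA$ in numerical-range language. The continuous linear functionals with $\|\omega\|=\omega(1)=1$ are exactly the (Banach-algebra) states of $\alg$, so $\HerA=\{a\in\alg : V(a)\subset\R\}$, where $V(a)=\{\omega(a):\omega\text{ a state of }\alg\}$ is the algebra numerical range. The two essential results to import from the standard numerical-range toolkit are: (i) for $h\in\HerA$, the norm equals the numerical radius $\nu(h):=\sup\{|\lambda|:\lambda\in V(h)\}$; and (ii) $\|e^{ith}\|=1$ for all $h\in\HerA$ and all $t\in\R$. From (i) the direct-sum claim is immediate: any $b\in\HerA\cap i\HerA$ satisfies $V(b)\subset\R\cap i\R=\{0\}$, hence $\|b\|=\nu(b)=0$.

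The crux is \emph{Vidav's lemma}: $\HerA$ is closed under squaring. Write the a priori decomposition $h^2=u+iv$ with $u,v\in\HerA$; one must show $v=0$. The argument proceeds by fixing a state $\omega$, expanding
\[
\omega(e^{\pm ith})=1\pm it\,\omega(h)-\tfrac{1}{2}t^2\,\omega(h^2)+O(t^3),
\]
and extracting the $t^2$-coefficient under the constraint $|\omega(e^{\pm ith})|\leq\|e^{\pm ith}\|=1$. Careful bookkeeping of the real and imaginary parts forces $\omega(v)=0$ for every state $\omega$, so that $V(v)=\{0\}$ and hence $v=0$ by (i). This is the only genuinely analytic input, and I expect it to be the main obstacle; everything downstream is essentially formal.

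Once $\HerA$ is stable under squaring, the polarisation $\tfrac{1}{2}(h_1h_2+h_2h_1)=\tfrac{1}{2}\bigl((h_1+h_2)^2-h_1^2-h_2^2\bigr)$ shows that $\HerA$ is a Jordan subalgebra of $\alg$. The conjugate-linear map $(h_1+ih_2)^\ast:=h_1-ih_2$ is then well defined by the direct-sum assertion, and decomposing $(h_1+ih_2)(k_1+ik_2)$ into hermitian and anti-hermitian components (using Jordan closure together with commutator bookkeeping) yields $(ab)^\ast=b^\ast a^\ast$. Finally, for any $a\in\alg$, the element $a^\ast a$ lies in $\HerA$; working inside the maximal commutative Banach subalgebra containing $a^\ast a$, where Gelfand theory applies because $\|\cdot\|$ coincides with the spectral radius on $\HerA$ by (i), one obtains $\|a^\ast a\|=r(a^\ast a)=\|a\|^2$, the $C^\ast$-identity, completing the proof.
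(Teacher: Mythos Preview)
The paper does not prove the Vidav--Palmer theorem at all: it is quoted, with a reference to Bonsall--Duncan, and then invoked as a black box in the proof of the paper's own characterisation theorem. There is therefore no argument in the paper against which to compare your proposal.

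For what it is worth, your outline follows the classical route (numerical range, the criterion $\|e^{ith}\|=1$, Vidav's square-closure lemma, Jordan closure, definition of the involution). The one place where the sketch is genuinely incomplete is the final sentence: from $a^\ast a\in\HerA$ and Sinclair's theorem you do get $\|a^\ast a\|=r(a^\ast a)$, but the asserted equality $r(a^\ast a)=\|a\|^2$ does not drop out of Gelfand theory in a commutative subalgebra containing $a^\ast a$, since $a$ itself need not lie there and nothing established so far ties $\|a\|$ to $a^\ast a$. This step is precisely Palmer's contribution and requires a separate argument (for instance, first proving that every state on $\alg$ is positive with respect to the new involution and that the involution is isometric, and then combining Cauchy--Schwarz for states with the numerical-radius identity on $\HerA$). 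As written, your plan stops just short of the substantive endpoint.
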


As further preparation let us note that, if $\alg$ is a unital involutive Banach algebra with identity of norm one and a continuous involution, and if $a\in \alg$ is self-adjoint with spectral radius less than 1, then there exists a self-adjoint element $b\in \alg$ such that $1-a=b^2$. Indeed, using the continuity of the involution, the proof as usually given for a unital Banach algebra with isometric involution, cf.\ \cite[Lemma~I.9.8]{Ta}, which is based on the fact that the coefficients of the power series around 0 of the principal branch of $\sqrt{1-z}$ on $\D$ are all real, goes through unchanged.

\begin{theorem*}
Let $\alg$ be a unital involutive Banach algebra with identity 1 of norm one. Then the following are equivalent:
\begin{enumerate}
\item The involution is continuous, and, if $\st$ is a continuous linear functional on $\alg$ such that $\Vert \st\Vert=\st(1)$, then $\st$ is positive;
\item The involution is continuous, and, if $\st$ is a continuous linear functional on $\alg$ such that $\Vert \st\Vert=\st(1)$, and $a\in \alg$ is self-adjoint, then $\st(a^2)$ is real;
\item $\alg$ is a $C^\ast$-algebra with the given norm and involution.
\end{enumerate}
\end{theorem*}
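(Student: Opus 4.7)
The plan is to prove the chain of implications (3) $\Rightarrow$ (1) $\Rightarrow$ (2) $\Rightarrow$ (3). The first is the classical fact cited in the introduction, together with the isometry of the involution on a $C^\ast$-algebra. The second is immediate: if $\st$ is positive and $a\in\alg$ is self-adjoint, then $\st(a^2)=\st(a^\ast a)\geq 0$, so in particular $\st(a^2)\in\R$. All the substance lies in (2) $\Rightarrow$ (3), which I would reduce to the Vidav-Palmer theorem.

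The first step towards applying Vidav-Palmer is to show that every $a\in\alg$ which is self-adjoint under the given involution lies in $\HerA$. Fix such an $a$ and a continuous linear functional $\st$ with $\Vert\st\Vert=\st(1)$. Since $1$ is self-adjoint, so is $a+t\cdot 1$ for each $t\in\R$, and condition (2) applied to $a$ and to $a+t\cdot 1$ yields
\[
\st\bigl((a+t)^2\bigr)=\st(a^2)+2t\,\st(a)+t^2\,\st(1)\in\R.
\]
Since $\st(a^2)\in\R$ and $\st(1)=\Vert\st\Vert\in\R$, any nonzero $t$ forces $\st(a)\in\R$, so $a\in\HerA$.

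The second step uses the continuity of the involution: every $x\in\alg$ decomposes as $x=\tfrac{1}{2}(x+x^\ast)+i\cdot\tfrac{1}{2i}(x-x^\ast)$, with both summands self-adjoint under the given involution and therefore in $\HerA$ by the first step. Hence $\alg=\HerA+i\HerA$, so Vidav-Palmer supplies an involution $\#$ on $\alg$ which, with the given norm, makes $\alg$ into a $C^\ast$-algebra. To conclude I need to verify that $\#$ coincides with the given involution $\ast$. Since $\HerA+i\HerA$ is a direct sum by Vidav-Palmer, the decomposition of $x$ above is the unique decomposition in $\HerA+i\HerA$, and the Vidav-Palmer formula $(a_1+ia_2)^{\#}=a_1-ia_2$ evaluates to
\[
x^{\#}=\tfrac{1}{2}(x+x^\ast)-i\cdot\tfrac{1}{2i}(x-x^\ast)=x^\ast,
\]
as needed.

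The only genuinely nontrivial move in this scheme is the polynomial trick of the first step, which converts reality of $\st$ on squares of self-adjoint elements into reality on self-adjoint elements themselves; after that, continuity of the involution and the Vidav-Palmer theorem do all the remaining work. I do not find a use in this plan for the square-root observation recorded just before the theorem, so presumably it either serves a more direct route from (2) to (1) that bypasses Vidav-Palmer, or plays only a supporting role in the authors' proof.
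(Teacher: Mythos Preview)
Your proof is correct, but the key step differs from the paper's. To show that every self-adjoint $a$ lies in $\HerA$, the authors use precisely the square-root observation you set aside: for $\Vert a\Vert<1$ they write $1-a=b^2$ with $b$ self-adjoint, so $\st(1-a)=\st(b^2)\in\R$ by (2), whence $\st(a)\in\R$; scaling removes the norm restriction. Your polynomial trick with $(a+t)^2$ reaches the same conclusion but is more elementary---no power series, no functional calculus. In fact your argument is slightly stronger than you state: the decomposition $x=\tfrac{1}{2}(x+x^\ast)+i\cdot\tfrac{1}{2i}(x-x^\ast)$ is purely algebraic and does not require continuity of the involution, and neither does the polynomial trick (whereas the square-root argument does, to guarantee $b=b^\ast$). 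So your route actually establishes $(2)\Rightarrow(3)$ without ever invoking the continuity hypothesis, which the authors regard as essential to their proof. As for your closing speculation: the square-root lemma is not a side remark or a shortcut to (1); it is exactly the device the authors use in place of your polynomial identity.
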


\begin{proof}
We need only prove that (2) implies (3). Suppose that $a \in \alg$ is self-adjoint and that $\Vert a\Vert<1$. Then, as remarked preceding the theorem, there exists a self-adjoint $b\in \alg$ such that $1-a=b^2$. If $\st$ is a continuous linear functional on $\alg$ such that $\Vert\st\Vert=\st(1)$, then the assumption in (2) implies that $1-\st(a)=\st(1-a)=\st(b^2)$ is real. Hence $\st(a)$ is real. This implies that $\st(a)$ is real, for all self-adjoint $a\in \alg$, and for all continuous linear functionals $\st$ on $\alg$ such that $\Vert\st\Vert=\st(1)$. Since certainly every element of $\alg$ can be written as $a_1+ ia_2$, for self-adjoint $a_1,a_2\in \alg$, this shows that $\alg=\HerA + i\, \HerA$. Then the Vidav-Palmer theorem yields that the involution in that theorem, which agrees with the given one, together with the given norm, makes $\alg$ into a $C^\ast$-algebra.
\end{proof}

In \cite[Theorem~11.2.5]{P2}, a number of equivalent criteria are given for a unital involutive Banach algebra---with a possibly discontinuous involution---to be a $C^\ast$-algebra, but positivity of certain continuous linear functionals is not among them. The proof above of such a criterion is made possible by the extra condition of the continuity of the involution. Although, given the Vidav-Palmer theorem, the proof is quite straightforward, we are not aware of a reference for this characterisation of $C^\ast$-algebras through positivity of linear functionals. Since the result seems to have a certain appeal, we thought it worthwhile to make it explicit.

\end{document}